\newcommand{\CopyName}{B.\ N.\ Khabibullin\footnote{This  work was financially supported by the Russian Science Foundation (Project No. 18-11-00002).}} 
\newcommand{\NAME}{B.\ N.\ KHABIBULLIN} %
\newcommand{\rightheadtext}{INTEGRALS OF SUBHARMONIC FUNCTIONS AND THEIR DIFFERENCES} 
\renewcommand{\refname}{\refnam}
\newtheorem*{thN}{Rolf~Nevanlinna Theorem}
\newtheorem*{lemGS}{Grishin\,--\,Sodin Lemma on Small Intervals}
\newtheorem*{ThSI}{Theorem on small intervals with weight}
\newtheorem*{LemmaA}{Lemma A}
\newtheorem*{maintheorem}{Main Theorem}
\newtheorem*{mainlemma}{Main Lemma}
\renewcommand{\leq}{\leqslant} 
\renewcommand{\geq}{\geqslant}
\newcommand{\rad}{\text{\tiny\rm rad}}
\newcommand{\RR}{\mathbb{R}} 
\newcommand{\CC}{\mathbb{C}} 
\newcommand{\NN}{\mathbb{N}} 
\newcommand{\ZZ}{\mathbb{Z}}
\DeclareMathOperator{\mes}{mes}
\DeclareMathOperator{\ess}{ess} 
\DeclareMathOperator{\sbh}{sbh} 
\DeclareMathOperator{\Meas}{Meas} 
\DeclareMathOperator{\dd}{\,{\mathrm d\!}}
\newcommand{\tit}{INTEGRALS OF SUBHARMONIC FUNCTIONS AND THEIR DIFFERENCES WITH WEIGHT OVER SMALL SETS ON A RAY} 
\date{}
\begin{document}
\hbox to \textwidth{\footnotesize
}
\vspace{0.3in}
\textup{\scriptsize{УДК 517.574  + 517.547.2}} \vs 
\markboth{{\NAME}}{{\rightheadtext}}
\begin{center} \textsc {\CopyName} \end{center}
\begin{center} \renewcommand{\baselinestretch}{1.3}\bf {\tit} \end{center}

\vspace{20pt plus 0.5pt} {\abstract{ \noindent B.\ N.\ Khabibullin.\ 
\textit{Integrals of subharmonic functions and their differences with weight over small sets on a ray}\matref 
\vspace{3pt} 

Let  $E$ be a measurable subset  in a segment $[0,r]$ in the positive part of the real axis in the complex plane, and 
$U=u-v$  be the difference  of subharmonic functions $u\not\equiv -\infty$ and $v\not\equiv -\infty$ on the complex plane. An integral of the maximum on circles centered at zero of $U^+:=\sup\{0,U\} $ or $|u|$ over $E$ with a function-multiplier $g\in L^p(E) $ in the integrand is estimated, respectively, in terms of the characteristic function $T_U$ of $U$ or the maximum of  $u$ on circles centered at zero, and also in terms of the linear Lebesgue measure of  $E$ and  the $ L^p$-norm of $g$. 
Our main theorem develops the proof of one of the classical theorems of Rolf Nevanlinna in the case $E=[0,R]$, given in the classical monograph by Anatolii A. Gol'dberg and Iosif V. Ostrovskii, and also generalizes analogs of the Edrei\,--\,Fuchs Lemma on small arcs for small intervals from the works of A.\,F.~Grishin, M.\,L.~Sodin, T.\,I.~Malyutina.
Our estimates are uniform in the sense that the constants in these estimates do not depend on $U$ or $u$, provided that $U$ has an integral normalization near zero or $u(0)\geq 0$, respectively.
}} \vsk
\subjclass{31A05, 30D30, 30D35, 30D20} 

\keywords{subharmonic function, meromorphic function, entire function, Nevanlinna theory, $\delta$-subharmonic function, Edrei\,--\,Fuchs Lemma on small arcs} 
\renewcommand{\refname}{\refnam}

\vskip10pt

\section{Introduction}

\subsection{Origins and research subject}\label{Ss1_1} 
One of the classical theorems of Rolf Nevanlinna can be considered as the original source of our results in this article \cite[pp. 24--27]{RNevanlinna}.  
Anatolii Asirovich Gol'dberg and Iosif Vladimiriovich Ostrovskii indicate the last reference in their classic monograph
 \cite[Notes, Ch. 1]{GOe}. 
The original source \cite{RNevanlinna} remained inaccessible to us. But the mentioned theorem is stated with a complete proof in the monograph of A.\,A.~Gol'dberg and I.\,V.~Ostrovskii \cite[Ch. 1, Theorem 7.2]{GOe}. We give this  result exactly in their formulation
 and notations.
 
Let $f$ be a meromorphic function on the {\it complex plane\/} $\CC$ with the {\it real axis\/} $\RR$, 
\begin{subequations}\label{TN}
	\begin{align}
	M(r,f)&:=\max\Bigl\{\bigl|f(z)\bigr| \colon |z|=r\Bigr\}, \quad r\in \RR^+:=\{x\in \RR \colon x\geq 0\}, 
	\tag{\ref{TN}M}\label{{TN}M}\\
	\intertext{and with the {\it Nevanlinna characteristics}}
	T(r, f)&\underset{\text{\tiny $r\in\RR^+$}}{:=}m(r,f)+N(r,f),  
	\tag{\ref{TN}T}\label{{TN}T}
	\\
	m(r,f)&\underset{\text{\tiny$r\in\RR^+$}}{:=}\frac{1}{2\pi}\int_0^{2\pi} \ln^+\bigl|f(re^{i\varphi})\bigr| \dd \varphi, 
	\quad \ln^+x\underset{\text{\tiny $x\in\RR^+$}}{:=}\max \{\ln x,0\}, 
	\tag{\ref{TN}m}\label{{TN}m}\\
	N(r,f)&\underset{\text{\tiny $r\in\RR^+$}}{:=}\int_{0}^{r}\frac{n(t,f)-n(0,f)}{t}\dd t+n(0,f)\ln r, 
	\tag{\ref{TN}N}\label{{TN}N}
	\end{align}
\end{subequations} 
where   $n(r,f)$ is the number of poles of $f$ in the closed disc $\overline D(r):=\{z\in \CC\colon |z|\leq r\}$, taking into account the multiplicity.
\begin{thN}[{\rm  \cite[Ch. 1, Theorem 7.2]{GOe}}] 
	Let  $f(z)$  be a meromorphic function, $k>1$ be a real number. Then 
	\begin{equation}\label{eN}
	\frac{1}{r}\int_0^r\ln^+ M(t,f)\dd t\leq C(k)T(kr,f), 
	\end{equation}
where the constant $C(k)>1$ depend on  $k$ only.   
\end{thN}
But \cite[Ch.~1, Proof of Theorem 7.2]{GOe} uses \cite[гл. 1, лемма 7.1]{GOe} for $R':=\sqrt{k}r $, which was proved only for  $R'>R>1$. It turns out to be essentially. So, for  meromorphic function
\begin{subequations}\label{Tn}
	\begin{align}
	f(z)&\underset{\text{\tiny $z\in \CC$}}{\equiv}\frac{1}{z}, \qquad M(r,f)\underset{\text{\tiny $r>0 $}}{\overset{\eqref{{TN}M}}{\equiv}}\frac{1}{r}, 
	\tag{\ref{Tn}M}\label{{Tn}f}
	\\
	m(r,f)&\underset{\text{\tiny $r>0$}}{\overset{\eqref{{TN}m}}{\equiv}}\ln^+ \frac{1}{r}, \quad
	N(r,f) \underset{\text{\tiny $r>0$}}{\overset{\eqref{{TN}N}}{\equiv}}\ln r,\quad  
	T(r,f) \underset{\text{\tiny $r>0$}}{\overset{\eqref{{TN}T}}{\equiv}}\ln^+r,  
	\tag{\ref{Tn}T}\label{{Tn}T}
	\\
	\intertext{for  the left-hand side of \eqref{eN}, we have}
	\frac{1}{r}\int_0^r&\ln^+ M(t,f)\dd t\underset{\text{\tiny $r>0$}}{\overset{\eqref{{Tn}f}}{\equiv}}
	\frac{1}{r}\int_0^r\ln^+ \frac{1}{t}\dd t
	\underset{\text{\tiny $r>0$}}{\equiv}
	1+\ln^+ \frac{1}{r}\underset{\text{\tiny $r>0$}}{\geq}  1.
	\tag{\ref{Tn}I}\label{{Tn}M}
	\end{align}
\end{subequations}
Thus, if \eqref{eN} is satisfied, then $C(k)\ln^+kr \geq 1$. But this is impossible if $0\leq r\leq 1/k$.
For the constant $C(k)$ independent of $r$, this is possible only if the requirement of the form $r\geq r_0> 0 $ is added to the inequality \eqref{eN}, and the constant $C(k)$ also depends from the choice of a fixed number 
$r_0>0$. For the sake of fairness, we note that the known cases of application of the Rolf Nevanlinna Theorem are considered, as a rule, only for the cases $ r \to + \infty $ or $r\geq 1$.

Integrals over small subsets on arc or ray intervals are also widely used in the theory of entire and meromorphic functions.  
The starting point of these second-type estimates is  the A.~Edrei and  W.\,H.\,J.~Fuchs Lemma on Small Arcs \cite[{\bf 2}, Lemma III, {\bf 9}]{EF}, which has found important applications in the theory of meromorphic functions, reflected, in particular, in   \cite[Ch.~1, Theorems 7.3, 7.4]{GOe}. 
A variation on the Edrei\,--\,Fuchs Lemma on Small Arcs is a Lemma on Small Intervals by A.\,F.~Grishin and 
M.\,L.~Sodin. The authors note  that its proof repeats verbatim the proof of the Edrei\,--\,Fuchs Lemma  on Small Arcs, and therefore it is presented in \cite[Lemma 3.1]{GrS} without proof, but with interesting applications
\cite[Lemma 3.2, Theorem 3.1]{GrS}. 
We formulate the Grishin\,--\,Sodin Lemma on Small Intervals also in the literal translation of the author's version.

Denote by  $\mes E$  the {\it linear Lebesgue  measure\/} of  $E\subset \RR$, and 
$E(R):=E\cap [1,R)$.  
\begin{lemGS}[{\rm \cite[Lemma 3.1]{GrS}}] Let  $f$ be a meromorphic function on $\CC$, $E\subset [1,+\infty)$. Then
	\begin{equation}\label{eGS}
	\frac{1}{r}\int_{E(r)}\ln^+M(t,f)\dd t\leq 
	C\frac{k}{k-1}\Bigl(\frac{\mes E(r)}{r}\ln
	\frac{2r}{\mes E(r)} \Bigr)T(kr,f),
	\end{equation}
where $C$ is an absolute constant.  
\end{lemGS}
A version of the Grishin\,--\,Sodin Lemma on Small Intervals for subharmonic functions, but only \textit{of finite order,\/} is proved in the joint work of A.\,F.~Grishin and T.\,I.~Malyutina
\cite{GrM}. The Grishin\,--\,Malyutina Lemma on Small Intervals  found several important applications in proofs of key results of \cite[theorems 2, 4]{GrM} and \cite[4.3, Lemma 4.2]{KhaShmAbd20}. We do not give this version, because it is embedded in our joint with L.\,A. Gabdrakhmanova main result from \cite[Theorem 1 (on small intervals)]{GabKha20}, where  is discussed in detail \cite[Conclusion of the Grishin\,--\,Malyutina  Theorem]{GabKha20}. 

In this article, we consider integrals over subsets of intervals on a ray, but  without estimates of integrals over small arcs on circles. In particular, we generalize the Rolf Nevanlinna Theorem, as well as the Grishin\,--\,Sodin Lemma on Small Intervals. 
Our Main Theorem on small intervals with  integral $L^p$-norms for functions-multipliers is formulated in SubSec. \ref{Ss1_2}.
Generalizations of the previous results are given in several directions.  First, we consider differences of subharmonic functions, i.e.,  $\delta$-subharmonic functions of arbitrary growth on the plane. 
We give a simple correction to the summands dictated by counterexample \eqref{Tn}. Second, in the integrand, a multiplier function of the class $L^p $ is allowed for $1<p \leq \infty $, and the integrals are estimated using the $L^p $-norm with corresponding changes in the contribution of small subsets  $E\subset \RR^+$. Third, the estimates of integrals over intervals are in a certain sense uniform with respect to the class of all ($\delta$-)subharmonic functions with an integral normalization near zero. This will make it possible to translate them in the future into estimates of integrals of plurisubharmonic functions and their differences in a multidimensional complex space both for intervals on rays with common beginnings and for subsets on these intervals.

\subsection{Basic definitions and a recent  theorem on small intervals}\label{Ss1_2} 

This subsection can be referred to as needed. Our designations may differ from those used above.
We write single-point sets without curly braces, if this does not cause confusion.
So, $\NN:=\{1,2,\dots\}$ is the set of  {\it natural\/}  numbers and    
$\NN_0:=0\cup \NN$, $\ZZ:=(-\NN)\cup \NN_0$   is the set of {\it integers\/},
and  $\overline \RR$ is the {\it extended real axis\/} with 
$-\infty:=\inf \RR$ and $+\infty:=\sup \RR$, 
$-\infty \leq x\leq +\infty$ for each $x\in \overline \RR$, 
$ -(\pm\infty)=\mp\infty$, $ \overline \RR^+:=\RR^+\cup +\infty$, and 
\begin{equation}\label{actR}
\begin{split}
x+&(+\infty)=+\infty \text{ for $x\in \overline \RR\!\setminus\!-\infty$},
\; x+(-\infty)=-\infty\text{ for }x\in \overline \RR\!\setminus\!+\infty,
\\
x\cdot &(\pm\infty):=\pm\infty=:(-x)\cdot (\mp\infty) \text{ for $x\in \overline \RR^+\!\setminus\!0$}, 
\\
\frac{\pm x}{0}&:=\pm\infty\text{ for $x\in  \overline \RR^+\!\setminus\!0$}, \; 
\frac{x}{\pm\infty}:=0\text{ for $x\in  \RR$},  \quad\text{but } 0\cdot \pm\infty:=0
\end{split}
\end{equation}   
unless otherwise stated; $x^+:=\max\{0,x\}=:(-x)^-$ for $x\in \overline \RR$; 
$\inf \varnothing:=+\infty$ and $\sup  \varnothing:=-\infty$ for the {\it empty set} $\varnothing$.  An {\it interval\/} $I\subset \overline \RR$ is a   {\it connected set\/}
with {\it ends\/} $\inf I\in \overline \RR$ and $\sup I\in \overline \RR$;
$(a,b):=\{x\in {\overline \RR} \colon a<x<b\}$, $[a,b]:=\{x\in {\overline \RR} \colon a\leq x\leq b\}$,
$[a,b):=[a,b]\!\setminus\! b$, $(a,b]:=[a,b]\!\setminus\! a$.
$ D(z,r):=\{z' \in \CC \colon |z'-z|<r\}$ is an {\it open disc,\/} $\overline  D(z,r):=\{z' \in \CC \colon |z'-z|\leq r\}$ 
is a {\it closed disc,} 
$\partial \overline D(z,r):=\{z' \in \CC \colon |z'-z|=r\}$ is the  
{\it circle with center\/ $z\in \CC$ of radius\/ $r\in \overline \RR^+$;}  $D(z,0)=\varnothing$, $\overline  D(z,0)=\partial \overline D(z,0)=z$,  $D(z,+\infty)=\CC$. Besides, $D(r):=D(0,r), \overline D(r):= \overline D(0,r)$, 
$\partial \overline D(r):=\partial \overline D(0,r)$. 

Given a function  $f\colon X\to {\overline \RR}$,    $f^+:=\sup\{0,f\}$ and  $f^-:=(-f)^+$ are  \textit{positive\/} and \textit{negative  parts\/} of function $f$, respectively; $|f|:=f^++f^-$.  

Given  $S\subset \CC$, $\sbh(S)$ is the class of  all {\it subharmonic\/}  on an open  neighbourhood of $S$.  The class  $\sbh(S)$ contains  the \textit{trivial  $(-\infty)$-function\/} $\boldsymbol{-\infty}$. We set   
$\sbh_*(S):=\sbh(S)\!\setminus\! \boldsymbol{-\infty}$ for connected subset $S\subset \CC$.

By $\lambda $ we denote the {\it linear Lebesgue measure\/} on $\RR $ and its restrictions on arbitrary {\it $\lambda$ -measurable subsets\/} $S\subset {\overline \RR}$, by setting $\lambda (\pm\infty):=0$. We also use the notation $\mes S: =\lambda (S) $ from subsection \ref {Ss1_1}. The concepts \textit{almost everywhere, measurability\/} and {\it integrability\/} mean $\lambda$-almost everywhere, $\lambda$-measurability and $\lambda $-integrability, respectively.

 We denote by
\begin{equation}\label{essg} 
\ess \sup_S f:=\inf \Bigl\{a\in \RR\colon \lambda\bigl(\{x\in S\colon
f(x)>a\}\bigr)=0 \Bigr\}
\end{equation}
essential upper bound of measurable  function $f$ defined almost everywhere on  $S$, and
\begin{subequations}\label{Lp}
	\begin{align}
	L^{\infty}(S)&:=\bigl\{f \colon \|f\|_{L^{\infty}(S)}:=\ess\sup_S |f|<+\infty \bigr\}, 
	\tag{\ref{Lp}$_{\infty}$}\label{Linty}
	\\
	\intertext{and for numbers $p>1$,}
	L^p(S)&:=\left\{f \colon \|f\|_{L^p(S)}:=\left(\int_S |f|^p\dd \lambda\right)^{1/p}<+\infty \right\}, \quad p\in (1,+\infty), 
	\tag{\ref{Lp}p}\label{Lpp}
	\\
	\intertext{together with the numbers $q$ associated with $p$ by equality}
	\frac1{p}&+\frac1{q}=1, \quad 1<q=\frac{p}{p-1}<+\infty, 
	\quad \text{but $q:=1$ if  $p=\infty$.}
	\tag{\ref{Lp}q}\label{Lq}
	\end{align} 
\end{subequations}

If $S:=I$ is an interval with ends $a\leq b$, then for {\it Lebesgue integral\/} of  $f$ over the interval $I$ we will use two forms of notation
\begin{equation}\label{intIf}
\int_If\dd \lambda:=:\int_a^b f(t) \dd t.
\end{equation} 

For $ r\in \RR^+ $ and an arbitrary function $ v\colon \partial\overline D(0, r) \to {\overline \RR}$, we define
\begin{equation}\label{Cu}
{\sf M}_v(r):=\sup_{|z|=r}v(z),  \quad {\sf C}_v(r)
:=\frac{1}{2\pi}\int_0^{2\pi} v(re^{is})\dd s  
\end{equation}
The latter is the {\it average over\/} the {\it circle\/} $\partial\overline D(0, r)$ for $v$,
 if the  function $s\mapsto re^{is}$ is integrable on $[0,2\pi]$; ${\sf C}_v(0):={\sf M}_v(0)={\sf C}_v(0)=v(0)$. 
For the properties of the characteristics ${\sf M}_v$ and ${\sf C}_v $ in the case of a subharmonic function $v$, see \cite[2.6]{Rans}, \cite[2.7]{HK}.
So, for meromorphic functions $f$ on $\CC$, we have
\begin{equation*}
M(r,f)\overset{\eqref{{TN}M}}{=}{\sf M}_{\ln |f|}(r), 
\quad m(r,f)\overset{\eqref{{TN}m}}{=}{\sf C}_{\ln^+|f|}(r).
\end{equation*}

\begin{ThSI}[{\rm \cite[Theorem  1, Remark 1.1]{GabKha20}}]
There is an absolute constant  $a\geq 1$ such that  
	\begin{enumerate}[{\rm (i)}]
		\item[{\rm [{\bf u}]}]\label{li} for any subharmonic function  $u\in \sbh_*(\CC)$, i.\,e., $v\not\equiv -\infty$ on $\CC$, 
		\item[{\rm [{\bf r}]}]\label{lii} for any numbers   $0\leq r_0\leq  r<R<+\infty$,
		\item[{\rm [{\bf E}]}]\label{liii} for any measurable subset  $E\subset [r,R]$,
		\item[{\rm [{\bf g}]}]\label{liv} for any measurable function   $g$
on  $E$, 
		\item[{\rm [{\bf b}]}]\label{lb} for any number    $b\in (0,1]$,
	\end{enumerate} 
the following inequality is fulfilled
	\begin{equation}\label{iend}
	\begin{split}
	\int_{E}{\sf M}_{|u|} g\dd \lambda&\leq \left(\frac{a}{b}\ln \frac{a}{b}\right)
	\Bigl( {\sf M}_u\bigl((1+b)R\bigr)	+2{\sf C}_u^-(r_0)\Bigr)
	\|g\|_{L^{\infty}(E)}
	\\
	&\times \underset{m_{\infty}(E; R,b)}{\underbrace{\left(\mes E+\min\{\mes E, 3bR\} \ln \frac{3ebR}{\min\{\mes E, 3bR\}}\right)}}
	\end{split}
	\end{equation}
where $m_{\infty}(E; R,b)\leq 2\mes E$ when  $\mes E> 3bR$, and 
\begin{equation*}
m_{\infty}(E; R,b) \leq 2 \mes E \ln \frac{3ebR}{\mes E},
\quad\text{if $\mes E\leq 3bR$.}
\end{equation*}
\end{ThSI}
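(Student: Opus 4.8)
\smallskip
\noindent\emph{Sketch of the argument.} Since $0\le g\le\|g\|_{L^\infty(E)}$ holds $\lambda$-a.e.\ on $E$ and ${\sf M}_{|u|}\ge0$, it suffices to prove the inequality for $g\equiv1$ and restore the factor $\|g\|_{L^\infty(E)}$ at the end. One may assume the right-hand side of \eqref{iend} is finite (for $u\in\sbh_*(\CC)$ this only means ${\sf C}_u^-(r_0)<+\infty$, automatic when $r_0>0$, and when $r_0=0$ the inequality is either of the same form with $u(0)>-\infty$ or trivially true), and the case $u\equiv\mathrm{const}$ is immediate. Write $\mu$ for the Riesz measure of $u$ and put
\begin{equation*}
A:=\bigl({\sf M}_u((1+b)R)\bigr)^{+}+{\sf C}_u^-(r_0),\qquad\text{so that}\qquad A\le{\sf M}_u((1+b)R)+2{\sf C}_u^-(r_0)
\end{equation*}
(an elementary sign check using ${\sf C}_u(r_0)\le{\sf M}_u(r_0)\le{\sf M}_u((1+b)R)$). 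From $|u|=u^{+}+u^{-}$ we get ${\sf M}_{|u|}(t)\le\bigl({\sf M}_u(t)\bigr)^{+}+{\sf M}_{u^{-}}(t)$, and since $t\mapsto{\sf M}_u(t)$ is non-decreasing on $\RR^+$ (maximum principle) the first term contributes at most $\bigl({\sf M}_u((1+b)R)\bigr)^{+}\mes E\le A\,\mes E$ to $\int_E{\sf M}_{|u|}\dd\lambda$. Everything thus reduces to bounding $\int_E{\sf M}_{u^{-}}\dd\lambda$ by an absolute multiple of $\frac1b\ln\frac1b\cdot A\cdot m_\infty(E;R,b)$.

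For the core estimate I would fix a radius $\rho\in\bigl((1+\tfrac b3)R,(1+\tfrac{2b}3)R\bigr)$ with $\mu(\partial\overline D(\rho))=0$ (a.e.\ $\rho$ works), so that $R<\rho<(1+b)R$ and $\rho\le2R$, and let $\nu$ be the image of $\mu$ under $w\mapsto|w|$, a measure on $[0,\rho]$ with $\nu([0,\rho])=\mu(\overline D(\rho))$. For $|z|=t\le R$ the Poisson--Jensen representation on $\overline D(\rho)$ gives $u(z)=P_\rho[u](z)-\int_{\overline D(\rho)}g_\rho(z,w)\dd\mu(w)$, with $P_\rho$ the Poisson integral over $\partial\overline D(\rho)$ and $g_\rho(z,w)=\ln\bigl|\frac{\rho^{2}-\bar wz}{\rho(z-w)}\bigr|\ge0$ the Green function, hence $-u(z)\le P_\rho[u^{-}](z)+\int_{\overline D(\rho)}g_\rho(z,w)\dd\mu(w)$. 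Harnack's inequality handles the Poisson term: $P_\rho[u^{-}](z)\le\frac{\rho+t}{\rho-t}\,{\sf C}_{u^{-}}(\rho)\le\frac8b\,{\sf C}_{u^{-}}(\rho)$ for $t\le R$, and ${\sf C}_{u^{-}}(\rho)={\sf C}_{u^{+}}(\rho)-{\sf C}_u(\rho)\le\bigl({\sf M}_u(\rho)\bigr)^{+}+{\sf C}_u^-(r_0)\le A$ by monotonicity of ${\sf M}_u$ and of ${\sf C}_u$ (so ${\sf C}_u^-$ is non-increasing). For the Green term I would use the crude but supremum-free bound $g_\rho(z,w)\le\ln\frac{2\rho}{|z-w|}\le\ln\frac{2\rho}{|t-|w||}$ (both sides positive since $|z-w|<2\rho$), so that $\int_{\overline D(\rho)}g_\rho(z,w)\dd\mu(w)\le\int_{[0,\rho]}\ln\frac{2\rho}{|t-s|}\dd\nu(s)$, and therefore for every $t\in[r,R]$
\begin{equation*}
{\sf M}_{u^{-}}(t)=\sup_{|z|=t}\bigl(-u(z)\bigr)\le\frac8b\,A+\int_{[0,\rho]}\ln\frac{2\rho}{|t-s|}\dd\nu(s).
\end{equation*}

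Integrating over $t\in E$ and applying Tonelli (nonnegative integrand),
\begin{equation*}
\int_E{\sf M}_{u^{-}}\dd\lambda\le\frac8b\,A\,\mes E+\nu([0,\rho])\cdot\sup_{s\in[0,\rho]}\int_E\ln\frac{2\rho}{|t-s|}\dd t.
\end{equation*}
The inner integral is of ``entropy'' type for the symmetric-decreasing kernel $t\mapsto\ln\frac{2\rho}{|t-s|}$: replacing $E$ by the interval of length $\mes E$ centred at $s$, splitting off the part where $|t-s|\ge\tfrac32bR$ (on which the kernel is $\le\ln\frac{4\rho}{3bR}\le\ln\frac3b$) and on the complementary part of measure $\le\min\{\mes E,3bR\}$ factoring $\frac{2\rho}{|t-s|}=\frac{2\rho}{3bR}\cdot\frac{3bR}{|t-s|}$, and using that $x\mapsto x\ln\frac{3ebR}{x}$ is increasing on $(0,3bR)$, one obtains, uniformly in $s$ and with an absolute $c_1$,
\begin{equation*}
\int_E\ln\frac{2\rho}{|t-s|}\dd t\le c_1\,\mes E\,\ln\tfrac eb+\min\{\mes E,3bR\}\,\ln\frac{3ebR}{\min\{\mes E,3bR\}}=c_1\,\mes E\,\ln\tfrac eb+\bigl(m_\infty(E;R,b)-\mes E\bigr);
\end{equation*}
here one uses $\rho\le2R$, and matching the exact constants $3e,\ 3bR$ is just a choice of the split point. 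Finally the total mass is controlled by a Jensen-type inequality, which is where the strictly smaller radius $\rho<(1+b)R$ pays off: $\nu([0,\rho])=\mu(\overline D(\rho))$ satisfies $\mu(\overline D(\rho))\ln\frac{(1+b)R}{\rho}\le{\sf C}_u((1+b)R)-{\sf C}_u(\rho)\le A$, and $\ln\frac{(1+b)R}{\rho}\ge c_2b$ for $b\in(0,1]$, so $\mu(\overline D(\rho))\le\frac1{c_2b}A$. Combining the three displays, using $\mes E\le m_\infty(E;R,b)$ and $\ln\frac eb\ge1$ and absorbing all absolute constants into the prefactor, gives $\int_E{\sf M}_{u^{-}}\dd\lambda\le\frac ab\ln\frac ab\cdot A\cdot m_\infty(E;R,b)$ for a suitable absolute $a$; adding the $u^{+}$-contribution and restoring $\|g\|_{L^\infty(E)}$ yields \eqref{iend}. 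The two stated upper bounds on $m_\infty(E;R,b)$ are elementary case checks.

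The main obstacle is precisely the mass estimate. The Green term naturally involves the Riesz mass of $u$ throughout $\overline D((1+b)R)$, including arbitrarily close to the bounding circle, which cannot be recovered from the single value ${\sf M}_u((1+b)R)$; carrying out the Poisson--Jensen representation on the strictly smaller disc $\overline D(\rho)$, $\rho=(1+\tfrac{2b}3)R$ say, preserves a logarithmic ``collar'' $\ln\frac{(1+b)R}{\rho}\asymp b$ in which a Jensen comparison applies, and this collar, together with the Harnack factor $\frac{\rho+t}{\rho-t}\asymp\frac1b$, is exactly what forces the $\frac ab\ln\frac ab$ shape of the constant rather than an $O(1)$ one. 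A secondary, purely technical nuisance is bookkeeping the exact constants $3e$ and $3bR$, and the sign of $\ln\frac1{|t-s|}$ (which may be negative when $bR$ is large), through the entropy estimate; none of this is conceptually hard.
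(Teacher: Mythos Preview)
The paper does not prove this statement: the Theorem on small intervals with weight is quoted verbatim from \cite[Theorem~1, Remark~1.1]{GabKha20} as a prior result, and the paper's own contribution is the Main Theorem, which generalizes it to $\delta$-subharmonic functions and to weights $g\in L^p$ with $1<p\le\infty$. There is therefore no proof in this paper to compare yours against.

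That said, your argument is correct and, in fact, follows the very scheme the paper employs for its Main Theorem (and hence presumably the one in \cite{GabKha20}): Poisson--Jensen on a disc of intermediate radius $\rho\in\bigl(R,(1+b)R\bigr)$ (the paper's Lemma~\ref{lem1}); the Harnack bound for the Poisson part; the crude Green estimate $g_\rho(z,w)\le\ln\frac{2\rho}{|t-|w||}$; the symmetric-rearrangement estimate of $\int_E\ln\frac{2\rho}{|t-s|}\dd t$ (the paper's Lemma~\ref{lem4}, resting on the Gol'dberg--Ostrovskii Lemma~A and, for general $q$, Lemma~\ref{lemln}); and the Jensen mass bound $\mu^{\rad}(\rho)\ln\frac{(1+b)R}{\rho}\le {\sf C}_u\bigl((1+b)R\bigr)-{\sf C}_u(\rho)$ (the paper's Lemma~\ref{lem2}). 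Your bookkeeping with $A=({\sf M}_u((1+b)R))^{+}+{\sf C}_u^{-}(r_0)$ and the inequality $A\le{\sf M}_u((1+b)R)+2{\sf C}_u^{-}(r_0)$ is a clean way to absorb signs that the paper handles slightly differently in its Main Theorem. One small slip: you write ${\sf M}_{u^{-}}(t)=\sup_{|z|=t}(-u(z))$, whereas in fact ${\sf M}_{u^{-}}(t)=\bigl(\sup_{|z|=t}(-u(z))\bigr)^{+}$; since your upper bound is nonnegative this is harmless.
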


\section{Main Theorem}\label{Main}

For a Borel subset $S\subset \CC$, the set  of all Borel, or Radon,  positive measures $\mu\geq 0$  on $S$
is denoted by $\Meas^+(S)$, 
and $\Meas(S):=\Meas^+(S)-\Meas^+(S)$ is the set  of all {\it charges,\/} or signed measures, on $S$.
For a measure  $\mu\in \Meas^+\bigl(\overline D(R)\bigr)$, we set
\begin{subequations}\label{murad}
\begin{flalign}
\mu^{\rad}(r)&:=\mu\bigl(\overline D(r)\bigr)\in \RR^+, 
\quad 0\leq r\leq R,
\tag{\ref{murad}m}\label{{murad}m}
\\
{\sf N}_{\mu}(r,R)&:=\int_{r}^{R}\frac{\mu^{\rad}(t)}{t}\dd t\in \overline \RR^+, 
\quad 0\leq r\leq R,
\tag{\ref{murad}N}\label{{murad}N}
\end{flalign}
\end{subequations} 

For $ 0\leq r\leq R\in \RR^+ $ and an arbitrary function $ v\colon \partial\overline D(r)\cup \partial\overline D(R)\to {\overline \RR}$, we define
\begin{equation}\label{MCr}
{\sf C}_v(r,R)\overset{\eqref{Cu}}{:=}{\sf C}_v(R)-{\sf C}_v(r)=\frac{1}{2\pi}\int_0^{2\pi} \bigl(v(Re^{is})-v(re^{is})\bigr)\dd s 
\end{equation}
provided that ${\sf C}_v(R)$ and ${\sf C}_v(r)$ are well defined.

If $D\subset \CC$ is a domain and  $u\in \sbh_*(D)$, then there is  its {\it Riesz measure\/} 
\begin{equation}\label{df:cm}
\varDelta_u:= \frac{1}{2\pi} {\bigtriangleup}  u\in \Meas^+( D), 
\end{equation}
where ${\bigtriangleup}$  is  the {\it Laplace operator\/}  acting in the sense of the  theory of distribution or generalized functions. This definition of the Riesz measures carries over naturally to $u\in \sbh_*(S)$ for connected subsets $S\subset \CC$.
If $v\in \sbh_*\bigl(\overline D(R)\bigr)$, then, by the Poisson\,--\,Jensen\,--\,Privalov formula,  we have
\begin{equation}\label{CN}
{\sf C}_v(r,R)={\sf N}_{\varDelta_v}(r,R)
\quad\text{for all $0<r<R<+\infty$}.
\end{equation}

Let $U=u-v$ be a difference of subharmonic functions $u,v \in \sbh_*\bigl(\overline D(0, R)\bigr)$, i.\,e., a  {\it $\delta$-subharmonic non-trivial ($\not\equiv\pm\infty$) function\/} on $\overline D(R)$ with the {\it Riesz charge\/} $\varDelta_{U}=\varDelta_u-\varDelta_v$ \cite{Arsove53}, \cite{Arsove53p}, \cite{Gr}, \cite[3.1]{KhaRoz18}. A representation $U=u_U-v_U$ with $u_U,v_U\in \sbh_*\bigl(\overline D(0, R)\bigr)$ is {\it canonical\/} if the  Riesz measure $\varDelta_{u_U}$ of $u_U$ is  the {\it upper variation\/}  $\varDelta_U^+$ of $\varDelta_U$  and  the Riesz measure  $\varDelta_{v_U}$ of $v_U$ is  the {\it lower  variation\/}  $\varDelta_U^-$ of $\varDelta_U$. 
The canonical representation for $U$ is defined up to the harmonic function added simultaneously to each of the representing subharmonic functions $u_U$ and $v_U$.
We define  a {\it characteristic function\/} of this $\delta$-subharmonic function $U$ as 
a function of two variables 
\begin{equation}\label{T}
\begin{split}
{\sf T}_U(r,R)&:={\sf C}_{\sup\{u_U,v_U\}}(r,R)=
{\sf C}_{U^+}(r,R)+{\sf C}_{v_U}(r,R)\\
&\overset{\eqref{CN}}{=}{\sf C}_{U^+}(r,R)+{\sf N}_{\varDelta_U^-}(r,R), \quad 0<r\leq R\in \RR^+. 
\end{split}
\end{equation}
This characteristic function $T_U$ is already uniquely defined for all $0<r\leq R<+\infty$  by positive values in $\RR^+$, and is also  increasing and convex with respect to $\ln$ in the second variable $R$, but is decreasing  in the first  variable $r\leq R$.

\begin{maintheorem}\label{th1} 
Let  $0< r_0< r<+\infty$, $1<k\in \RR^+$,  $E\subset [0,r]$ be measurable, 
$g\in L^p(E)$, where $1<p\leq \infty$ and $q\in [1,+\infty)$ is from \eqref{Lq}, 
$U\not\equiv \pm\infty$  be a $\delta$-subharmonic  non-trivial functions on $\CC$, 
and $u\not\equiv -\infty$ be a subharmonic function on $\CC$.  Then 
\begin{subequations}\label{1}
\begin{flalign}
\frac{1}{r}\int_{E} {\sf M}_{U}^+(t)g(t)\dd t\leq
4q\frac{k}{k-1} \bigl({\sf T}_{U}(r_0,kr)+{\sf C}_{U^+}(r_0)\bigr) \|g\|_{L^p(E)}\frac{\sqrt[q]{\mes E}}{r}
\ln\frac{4kr}{\mes E},
\tag{\ref{1}T}\label{inDl+}
\\ 
\frac{1}{r}\int_{E} {\sf M}_{|u|}(t)g(t)\dd t\leq
5q\frac{k}{k-1} \bigl({\sf M}_{u^+}(kr)+{\sf C}_{u^-}(r_0)\bigr) \|g\|_{L^p(E)}\frac{\sqrt[q]{\mes E}}{r}
\ln\frac{4kr}{\mes E}.
\tag{\ref{1}M}\label{uM}
\end{flalign}
\end{subequations}
\end{maintheorem}

\section{Lemmata and Proof of Main Theorem}

\begin{lemma}\label{lem2} Let $\mu\in \Meas^+\bigl(\overline D(R)\bigr)$ . Then
\begin{equation}\label{N}
\mu^{\rad}(r)\leq  \frac{R}{R-r}{\sf N}_{\mu}(r,R)\quad \text{for each $0\leq r\leq R$.}
\end{equation} 
\end{lemma}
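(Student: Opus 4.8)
The plan is to exploit the monotonicity of $\mu^{\rad}$, which follows immediately from $\overline D(t)\subset \overline D(R)$ for $t\le R$ and positivity of $\mu$. Since $t\mapsto \mu^{\rad}(t)$ is nondecreasing on $[0,R]$, for any $t\in[r,R]$ we have $\mu^{\rad}(r)\le \mu^{\rad}(t)$, and hence
\begin{equation*}
{\sf N}_{\mu}(r,R)\overset{\eqref{{murad}N}}{=}\int_r^R\frac{\mu^{\rad}(t)}{t}\dd t\ge \mu^{\rad}(r)\int_r^R\frac{\dd t}{t}=\mu^{\rad}(r)\ln\frac{R}{r}.
\end{equation*}
This already gives $\mu^{\rad}(r)\le {\sf N}_{\mu}(r,R)/\ln(R/r)$, so it remains only to compare $1/\ln(R/r)$ with $R/(R-r)$, i.e.\ to verify the elementary inequality $\ln(R/r)\ge (R-r)/R=1-r/R$ for $0<r\le R$.

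First I would dispose of the degenerate cases: if $r=0$ then ${\sf N}_{\mu}(0,R)=+\infty$ unless $\mu^{\rad}(0)=0$, and in either case \eqref{N} holds by the conventions in \eqref{actR} (with $R/(R-0)=1$); if $r=R$ then $R/(R-r)=+\infty$ and the inequality is trivial. So assume $0<r<R$. Then I would write $x:=r/R\in(0,1)$ and reduce the claim to $-\ln x\ge 1-x$, equivalently $\ln x\le x-1$, which is the standard concavity bound for the logarithm (tangent line at $x=1$); combined with the displayed estimate above this yields $\mu^{\rad}(r)\le {\sf N}_{\mu}(r,R)/\ln(R/r)\le {\sf N}_{\mu}(r,R)\cdot\frac{R}{R-r}$.

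There is essentially no obstacle here; the only mild point of care is the boundary/degenerate bookkeeping at $r=0$ and $r=R$ and making sure the infinite values are handled consistently with \eqref{actR}, together with noting that the finiteness of $\mu$ on $\overline D(R)$ guarantees $\mu^{\rad}(r)<+\infty$ so that the middle term $\mu^{\rad}(r)\ln(R/r)$ is a genuine product of a finite number and a nonnegative one. Everything else is the one-line estimate ${\sf N}_{\mu}(r,R)\ge\mu^{\rad}(r)\ln(R/r)$ plus the elementary bound $\ln(R/r)\ge 1-r/R$.
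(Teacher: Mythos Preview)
Your proof is correct and follows essentially the same route as the paper: both use monotonicity of $\mu^{\rad}$ to obtain ${\sf N}_\mu(r,R)\ge\mu^{\rad}(r)\ln(R/r)$ and then the elementary bound $\ln(R/r)\ge(R-r)/R$. The paper merely compresses these two steps into a single chain, getting the second inequality via $\int_r^R\frac{\dd t}{t}\ge\int_r^R\frac{\dd t}{R}=\frac{R-r}{R}$ rather than by appealing to concavity of the logarithm.
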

\begin{proof}[Proof] By definitions \eqref{murad}, we have
\begin{equation*}
\mu^{\rad}(r)= \int_{r}^{R}\frac{\mu^{\rad}(r)}{t} \dd t\biggm/\int_r^R \frac{1}{t}\dd t 
 \leq {\sf N}_{\mu}(r,R)\biggm/\int_r^R \frac{1}{R}\dd t
= \frac{R}{R-r}{\sf N}_{\mu}(r,R),
\end{equation*}
and we obtain \eqref{N}.
\end{proof}

\begin{lemma}\label{lem1} Let\/  $0\leq r<R<+\infty$,  $E\subset [0,r]$ be measurable, $U=u-v$  be a difference of subharmonic  functions $u, v\in \sbh_* \bigl(\overline D(R)\bigr)$,    
$\varDelta_v$ be the Riesz measure of $v$,  and $g\in L^p(E)$. Then 
\begin{equation}\label{inDR}
\int_{E} {\sf M}_{U}^+(t)g(t)\dd t\leq \biggl(\frac{R+r}{R-r}
{\sf C}_{U^+}(R)\sqrt[q]{\mes E}+\varDelta_v^{\rad}(R)
\sup_{0\leq x\leq R} \Bigl\|\ln\frac{2R}{\bigl|\cdot-x\bigr|}\Bigr\|_{L^q(E)}\biggr) \|g\|_{L^p(E)}. 
\end{equation}
\end{lemma}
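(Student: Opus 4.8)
The plan is to produce a pointwise majorant for ${\sf M}_{U}^{+}(t)$ on $[0,r]$ from the Poisson\,--\,Jensen\,--\,Privalov formula on the disc $D(R)$, and then integrate it against $g$ via H\"older's inequality. Since ${\sf M}_{U}^{+}\geq 0$ we have ${\sf M}_{U}^{+}g\leq {\sf M}_{U}^{+}|g|$ and $\||g|\|_{L^p(E)}=\|g\|_{L^p(E)}$, so we may assume $g\geq 0$. Apply the Poisson\,--\,Jensen\,--\,Privalov representation on $D(R)$ separately to $u$ and to $v$: writing $P_R(z,\zeta):=\frac{R^2-|z|^2}{|\zeta-z|^2}\geq 0$ for the Poisson kernel and $g_R(z,\zeta):=\ln\frac{|R^2-\overline{\zeta}z|}{R|\zeta-z|}\geq 0$ for the Green function of $D(R)$, subtracting the representation of $v$ from that of $u$ and discarding the non-positive term $-\int_{\overline D(R)}g_R(z,\cdot)\dd \varDelta_u$ yields, for $z\in D(R)$,
\begin{equation*}
U(z)\leq \frac{1}{2\pi}\int_0^{2\pi}P_R(z,Re^{is})\,U^{+}(Re^{is})\dd s+\int_{\overline D(R)}g_R(z,\zeta)\dd \varDelta_v(\zeta),
\end{equation*}
where we also used $U\leq U^{+}$ on $\partial\overline D(R)$ together with $P_R\geq 0$. (If ${\sf C}_{U^{+}}(R)=+\infty$ the asserted inequality \eqref{inDR} is trivial, so one may assume $\zeta\mapsto U^{+}(\zeta)$ is integrable on $\partial\overline D(R)$, which is anyway classical for $\delta$-subharmonic $U$ on a neighbourhood of $\overline D(R)$.)

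Next I would estimate both kernels for $|z|=t\leq r<R$. The Harnack-type bound $P_R(z,\zeta)\leq \frac{R^2-t^2}{(R-t)^2}=\frac{R+t}{R-t}\leq \frac{R+r}{R-r}$ collapses the first integral to $\frac{R+r}{R-r}{\sf C}_{U^{+}}(R)$ by \eqref{Cu}; and from $|R^2-\overline{\zeta}z|\leq R^2+|\zeta||z|\leq 2R^2$ one gets $g_R(z,\zeta)\leq \ln\frac{2R}{|\zeta-z|}$. Passing to the supremum over the circle $|z|=t$ and using $\inf_{|z|=t}|\zeta-z|=\bigl||\zeta|-t\bigr|$ together with the trivial one-sided inequality $\sup_{|z|=t}\int h(z,\cdot)\dd \varDelta_v\leq \int\sup_{|z|=t}h(z,\cdot)\dd \varDelta_v$ for $h\geq 0$, we obtain
\begin{equation*}
{\sf M}_{U}^{+}(t)\leq \frac{R+r}{R-r}\,{\sf C}_{U^{+}}(R)+\int_{\overline D(R)}\ln\frac{2R}{\bigl||\zeta|-t\bigr|}\dd \varDelta_v(\zeta),\qquad 0\leq t\leq r,
\end{equation*}
the positive part costing nothing since $|\zeta-z|\leq R+r<2R$ keeps the right-hand side non-negative.

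It then remains to multiply by $g(t)$, integrate over $E$, and invoke Tonelli's theorem for the double integral in $(t,\zeta)$ (the integrand is non-negative): the first term gives $\frac{R+r}{R-r}{\sf C}_{U^{+}}(R)\int_E g\dd \lambda\leq \frac{R+r}{R-r}{\sf C}_{U^{+}}(R)\,\sqrt[q]{\mes E}\,\|g\|_{L^p(E)}$ by H\"older's inequality with the conjugate pair \eqref{Lq}, while for the second term H\"older in the inner integral over $E$ yields, with $x:=|\zeta|\in[0,R]$,
\begin{equation*}
\int_E\ln\frac{2R}{|t-x|}\,g(t)\dd t\leq \Bigl\|\ln\frac{2R}{\bigl|\cdot-x\bigr|}\Bigr\|_{L^q(E)}\|g\|_{L^p(E)}\leq \sup_{0\leq x\leq R}\Bigl\|\ln\frac{2R}{\bigl|\cdot-x\bigr|}\Bigr\|_{L^q(E)}\|g\|_{L^p(E)},
\end{equation*}
and integrating this $\zeta$-independent bound against $\dd \varDelta_v$ multiplies it by $\varDelta_v\bigl(\overline D(R)\bigr)=\varDelta_v^{\rad}(R)$, see \eqref{murad}. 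Summing the two contributions is exactly \eqref{inDR}. The argument is essentially routine; the only points needing a little care are the interchange of the supremum over $|z|=t$ with the $\varDelta_v$-integral --- for which the one-sided inequality above suffices --- and the measurability/finiteness conventions making the two-fold use of H\"older and Tonelli legitimate, which I would dispatch exactly as sketched.
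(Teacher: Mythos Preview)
Your proof is correct and follows essentially the same route as the paper's: apply the Poisson\,--\,Jensen formula on $D(R)$, discard the non-positive $\varDelta_u$-Green potential, bound the Poisson kernel by the Harnack constant $(R+r)/(R-r)$ and the Green kernel by $\ln\frac{2R}{\bigl|t-|\zeta|\bigr|}$, then integrate over $E$ against $g$ and apply H\"older's inequality to each term. The only cosmetic difference is that you spell out the sup--integral interchange and the Tonelli\,/\,measurability justifications a bit more explicitly than the paper does.
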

\begin{proof}[Proof] For $w\in E\subset \overline D(r)$, bу the Poisson\,--\,Jensen formula \cite[4.5]{Rans}, we have
\begin{multline*}
U(te^{ia})=\frac{1}{2\pi}\int_0^{2\pi}U(Re^{is})\Re \frac{Re^{is}+te^{ia}}{Re^{is}-te^{ia}}\dd s
-\int_{D(R)}\ln \Bigl|\frac{R^2-zte^{-ia}}{R(te^{ia}-z)}\Bigr|\dd \varDelta_u(z) \\
+\int_{D(R)}\ln \Bigl|\frac{R^2-zte^{-ia}}{R(te^{ia}-z)}\Bigr|\dd \varDelta_v(z)  
\leq \frac{R+r}{R-r}{\sf C}_{U^+}(R) +\int_{D(R)}\ln \frac{2R}{\bigl|t-|z|\bigr|}\dd \varDelta_v(z) 
\end{multline*}
where the right-hand side of the inequality is positive and independent of $a\in[0,2\pi)$. Hence, by integrating, we get 
\begin{equation*}
\int_{E} {\sf M}_{U}^+(t)g(t)\dd t\leq \frac{R+r}{R-r}
{\sf C}_{U^+}(R)\int_E |g|(t)\dd t+
\int_{D(R)}\int_E\ln\frac{2R}{\bigl|t-|z|\bigr|} |g|(t)\dd t\dd \varDelta_v(z).
\end{equation*}
Therefore, by H\"older's inequality, we obtain
\begin{multline*}
\int_{E} {\sf M}_{U}^+(t)g(t)\dd t\leq \frac{R+r}{R-r}
{\sf C}_{U^+}(R)\|g\|_{L^p(E)}(\mes E)^{1/q}\\
+\varDelta_v\bigl(D(R)\bigr)
\|g\|_{L^p(E)}\sup_{z\in D(R)} \Bigl\|\ln\frac{2R}{\bigl|\cdot-|z|\bigr|}\Bigr\|_{L^q(E)}.
\end{multline*}
The latter gives \eqref{inDR}.  
\end{proof}

\begin{lemma}\label{lemln}
Let $q\in \RR^+$,  $0<A\in \RR^+$, and $a\in (0,A/e]$. Then
\begin{equation}\label{intq}
\int_0^a\ln^q\frac{A}{x} \dd x\leq (1+q^{q+1})
a\ln^q\frac{A}{a}. 
\end{equation}
\end{lemma}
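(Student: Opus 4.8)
The plan is to reduce the integral $\int_0^a \ln^q\frac{A}{x}\,dx$ to an explicit antiderivative via the substitution $x = Ae^{-t}$, and then bound the resulting tail of the Gamma-type integral by its leading term. First I would set $t = \ln\frac{A}{x}$, so that $x = Ae^{-t}$, $dx = -Ae^{-t}\,dt$, and as $x$ runs over $(0,a]$ the variable $t$ runs over $[\ln\frac{A}{a}, +\infty)$. Writing $b := \ln\frac{A}{a} \geq 1$ (the hypothesis $a \leq A/e$ is exactly $b \geq 1$), the integral becomes $A\int_b^\infty t^q e^{-t}\,dt$. The goal \eqref{intq} is then equivalent to the clean estimate
\begin{equation*}
\int_b^\infty t^q e^{-t}\,dt \leq (1+q^{q+1})\, b^q e^{-b}
\qquad\text{for all } b\geq 1,\ q\in\RR^+,
\end{equation*}
since $A e^{-b} = a$ and $b^q = \ln^q\frac{A}{a}$.

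Next I would prove this one-variable inequality. The idea is to split the tail at the point $t = b + q$ (heuristically the place where $t\mapsto t^q e^{-t}$ stops being comparable to its value at $b$ once $b$ is past the mode, and in any case a convenient cut). On $[b, b+q]$ the integrand is at most its maximum on that interval; a crude bound is $\int_b^{b+q} t^q e^{-t}\,dt \leq q\cdot \max_{t\in[b,b+q]} t^q e^{-t}$, which one controls by $(b+q)^q e^{-b}$ or, more carefully, shows is $\leq q^{q+1}\cdot \text{(something)}\cdot b^q e^{-b}$ after using $b\geq 1$. On $[b+q,\infty)$ one uses that $t\mapsto t^q e^{-t}$ is decreasing there (its derivative $t^{q-1}e^{-t}(q-t)$ is negative for $t>q$, and $b+q>q$), together with the bound $\int_{b+q}^\infty e^{-(t-b-q)}\,dt = 1$ after factoring: $\int_{b+q}^\infty t^q e^{-t}\,dt \leq (b+q)^q e^{-(b+q)}\int_{b+q}^\infty e^{-(t-b-q)}\,dt = (b+q)^q e^{-(b+q)} \leq b^q e^{-b}\cdot\frac{(b+q)^q}{e^q b^q}$, and here $(1+q/b)^q \leq e^q$ gives $\leq b^q e^{-b}$, which is the "$1\cdot b^q e^{-b}$" contribution. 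It remains to absorb the $[b,b+q]$ piece into $q^{q+1} b^q e^{-b}$: estimate $\max_{t\in[b,b+q]} t^q e^{-t} \leq (b+q)^q e^{-b}$ (dropping the decaying exponential downward to its value at $b$ is wrong direction, so instead keep $e^{-b}$ as the largest value of $e^{-t}$ on the interval and bound $t^q\leq (b+q)^q$), so the piece is $\leq q(b+q)^q e^{-b} = q b^q e^{-b}(1+q/b)^q \leq q b^q e^{-b} e^q$. Since we want the constant $q^{q+1}$, not $q e^q$, I would instead cut at $t=b(1+\text{something})$ or, cleanest, simply bound $\int_b^\infty t^q e^{-t}\,dt$ by integrating by parts once or by the elementary inequality $t^q \leq q^q e^{t/?}$-type comparison; the cleanest route is: for $t\geq b\geq 1$, write $t^q e^{-t/2} \leq \sup_{s>0} s^q e^{-s/2} = (2q)^q e^{-q}$, hence $\int_b^\infty t^q e^{-t}\,dt = \int_b^\infty (t^q e^{-t/2}) e^{-t/2}\,dt \leq (2q)^q e^{-q} \cdot 2 e^{-b/2}$, but this loses the needed $b^q$ factor. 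So the split-at-$b+q$ argument above is the right one, and the only real bookkeeping is choosing the split point and the crude bounds so that the two pieces sum to exactly $(1+q^{q+1})b^q e^{-b}$; a split at $t = b(1 + q/b) = b+q$ with the bound $(1+q/b)^q\le e^q$ on the tail and $q\cdot(b+q)^q e^{-b}$ on the bounded piece, then $q(b+q)^q = q b^q(1+q/b)^q$ and using $(1+q/b)^q \le (1+q)^q \le$ ... — one checks $q(1+q)^q \le q^{q+1}$ fails for small $q$, so the honest statement uses $q(b+q)^q e^{-b} \le q b^q e^{-b}(1+q)^q$ is not tight enough and one instead keeps $b\ge 1$ to write $b+q \le b(1+q) \le$ and bounds more carefully, or simply enlarges to the stated $1+q^{q+1}$ which is generous.

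The main obstacle is purely the constant: getting the bounded-interval contribution to fit under $q^{q+1} b^q e^{-b}$ uniformly in $b\geq 1$ and $q>0$, since for small $q$ the factor $q^{q+1}\to 0$ while $e^q\to 1$, so a naive $e^q$ in place of $q^q$ is too big near $q=0$. I expect this is handled by noting that near $q=0$ the integrand $\ln^q\frac{A}{x}$ is close to $1$ and $\int_0^a 1\,dx = a \leq a\ln^q\frac{A}{a}\cdot(1+o(1))$ is already covered by the leading "$1\cdot$" term with room to spare, so the $q^{q+1}$ term only needs to do work when $q$ is bounded away from $0$, where $q^{q+1}$ is comparable to $e^q$ up to absolute constants — and the inequality \eqref{intq} is stated with enough slack (the "$1+$") to absorb this. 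Concretely I would verify the two regimes $q\leq 1$ and $q\geq 1$ separately, using in the first regime the monotonicity of $x\mapsto \ln^q\frac{A}{x}$ and a direct comparison, and in the second the split-integral estimate above, checking in each case that the total stays below $(1+q^{q+1})a\ln^q\frac{A}{a}$.
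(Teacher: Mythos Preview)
Your reduction via $t=\ln(A/x)$ to the inequality $\int_b^\infty t^q e^{-t}\,dt\le (1+q^{q+1})\,b^q e^{-b}$ for $b\ge 1$ is correct and useful. The subsequent argument, however, contains a genuine error: in the tail piece you claim
\[
\int_{b+q}^{\infty} t^q e^{-t}\,dt \;\le\; (b+q)^q e^{-(b+q)}\int_{b+q}^{\infty} e^{-(t-b-q)}\,dt,
\]
which would require $t^q e^{-t}\le (b+q)^q e^{-t}$, i.e.\ $t^q\le (b+q)^q$, on $[b+q,\infty)$ --- exactly the wrong direction. Knowing only that $t\mapsto t^q e^{-t}$ is decreasing on $[b+q,\infty)$ gives a pointwise bound but no useful integral bound without an additional decaying factor, and your attempted ``factoring'' does not supply one. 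Your alternative via $t^q e^{-t/2}\le (2q/e)^q$ indeed loses the $b^q$, and the bounded-interval piece $q(b+q)^q e^{-b}$ collides with $q^{q+1}\to 0$ as $q\downarrow 0$, as you noticed; splitting into $q\le 1$ and $q\ge 1$ may be salvageable but you have not actually carried it out, so as written the proof is incomplete.

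The paper takes a different and much cleaner route: repeated integration by parts. In your transformed variable this is simply
\[
\int_b^\infty t^q e^{-t}\,dt
= b^q e^{-b}+q\,b^{q-1}e^{-b}+q(q-1)\,b^{q-2}e^{-b}+\cdots
+q(q-1)\cdots(q-\lfloor q\rfloor)\int_b^\infty t^{q-\lfloor q\rfloor-1}e^{-t}\,dt,
\]
and since the final exponent $q-\lfloor q\rfloor-1$ is negative and $t\ge b\ge 1$, the last integrand is $\le e^{-t}$, so that integral is $\le e^{-b}$. Factoring out $b^q e^{-b}$ and using $b\ge 1$ to bound $b^{-j}\le 1$ gives the constant
\[
P_q:=1+q+q(q-1)+\cdots+q(q-1)\cdots(q-\lfloor q\rfloor),
\]
which satisfies $P_q=1+qP_{q-1}$ and hence (for $q\ge 1$) $P_q\le 1+q^{q+1}$. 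This avoids any interval splitting and handles all the constant-tracking in one stroke; if you want to repair your approach, the quickest fix is to replace the split at $b+q$ by exactly this integration-by-parts recursion after your substitution.
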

\begin{proof}[Proof] 
We denote by  $\lfloor q\rfloor:=\max\{ n\in \ZZ\colon n\leq q\}$  the {\it integer part\/} of $q$.
Evidently, 
\begin{equation}\label{lnq}
\ln\frac{A}{x}\geq 1 \quad \text{if $x\in (0,A/e]$, } 
\end{equation}
We integrate the integral from \eqref{intq} by parts $\lfloor q \rfloor + 1 $ times: 
\begin{multline*}
\int_0^a\ln^q\frac{A}{x} \dd x=a\ln^q\frac{A}{a}+
qa\ln^{q-1}\frac{A}{a}+q(q-1)a\ln^{q-2}\frac{A}{a}+\dots\\
\dots+q(q-1)\cdots (q-\lfloor q\rfloor +1)\ln^{q-\lfloor q\rfloor}\frac{A}{a}
+q(q-1)\cdots (q-\lfloor q\rfloor)\int_0^a \ln^{q-\lfloor q\rfloor-1}\frac{A}{x}\dd x\\
\leq \Bigl(a\ln^q\frac{A}{a}\Bigr)
\left(1+ \frac{q}{\ln\frac{A}{a}}+\frac{q(q-1)}{\ln^{2}\frac{A}{a}}+\dots\right.\\
\left.\dots+\frac{q(q-1)\cdots (q-\lfloor q\rfloor +1)}{\ln^{\lfloor q\rfloor}\frac{A}{a}}
+\frac{q(q-1)\cdots (q-\lfloor q\rfloor)}{a\ln^q\frac{A}{a}}\int_0^a \ln^{q-\lfloor q\rfloor-1}\frac{A}{x}\dd x\right)\\
\overset{\eqref{lnq}}{\leq}
\underset{P_q}{\underbrace{\bigl(1+ q+q(q-1)+\dots+q(q-1)\cdots (q-\lfloor q\rfloor +1)
+q(q-1)\cdots (q-\lfloor q\rfloor)\bigr)}} \;a\ln^q\frac{A}{a}.
\end{multline*}
We have  a recurrent formula $P_q=1+qP_{q-1}$ for $1\leq q\in \RR^+$. 
Therefore, 
\begin{equation*}
P_q=\sum_{j=0}^{\lfloor q\rfloor}q^j+q^{\lfloor q\rfloor}\bigl(q-\lfloor q\rfloor\bigr) \leq
1+q^{\lfloor q\rfloor+1}\leq 1+q^{q+1} \quad\text{for each $p\in \RR^+$}.
\end{equation*}
Thus, we obtain \eqref{intq}. 
\end{proof}

\begin{lemma}\label{lem4} For $E\subset [0,r]\subset [0,R]$, $q\geq 1$, and $0\leq x\leq R$, we have
 \begin{equation}\label{sqq}
\Bigl\|\ln\frac{2R}{\bigl|\cdot-x\bigr|}\Bigr\|_{L^q(E)}\leq 
2q\sqrt[q]{\mes E}\ln \frac{4R}{\mes E},
\end{equation}
\end{lemma}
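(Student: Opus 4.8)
The plan is to reduce the $L^q$-estimate to the one-dimensional integral inequality \eqref{intq} of Lemma~\ref{lemln} by a symmetric‑rearrangement argument, and then to absorb the resulting constant into the factor $2q$. Throughout put $m:=\mes E$; we may assume $m>0$, since for $m=0$ both sides of \eqref{sqq} vanish by the conventions in \eqref{actR}. Because $E\subset[0,r]\subset[0,R]$ and $0\le x\le R$, for each $t\in E$ we have $|t-x|\le\max\{t,x\}\le R<2R$; thus the only values of $s\mapsto\ln^q\frac{2R}{s}$ that occur are those with $0<s<2R$, where $s\mapsto\ln\frac{2R}{s}$ is positive and strictly decreasing, hence (as $q\ge1$) so is $\phi(s):=\ln^q\frac{2R}{s}$.

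The core step is the comparison of $\int_E\phi(|t-x|)\dd t$ with the integral over the symmetric interval $E^*:=\bigl[x-\tfrac m2,\,x+\tfrac m2\bigr]$ of the same length $m$:
\[
\int_E\phi\bigl(|t-x|\bigr)\dd t\ \le\ \int_{E^*}\phi\bigl(|t-x|\bigr)\dd t\ =\ 2\int_0^{m/2}\phi(s)\dd s .
\]
Indeed, $E$ and the interval $E^*$ both have measure $m$, so $\mes(E\setminus E^*)=\mes(E^*\setminus E)=m-\mes(E\cap E^*)$; on $E\setminus E^*$ one has $|t-x|>\tfrac m2$, whence $\phi(|t-x|)\le\phi(\tfrac m2)$, while on $E^*\setminus E$ one has $|t-x|\le\tfrac m2$, whence $\phi(|t-x|)\ge\phi(\tfrac m2)$; therefore
\[
\int_{E\setminus E^*}\phi\bigl(|t-x|\bigr)\dd t\ \le\ \phi\Bigl(\tfrac m2\Bigr)\mes(E\setminus E^*)\ =\ \phi\Bigl(\tfrac m2\Bigr)\mes(E^*\setminus E)\ \le\ \int_{E^*\setminus E}\phi\bigl(|t-x|\bigr)\dd t,
\]
and adding $\int_{E\cap E^*}\phi(|t-x|)\dd t$ to the two ends gives the asserted inequality. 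This rearrangement (a ``bathtub''-type principle for a radially decreasing integrand) is the only delicate point: one must keep track of the fact that $E^*$ may protrude past the endpoints $0$ or $R$ yet still lies in the range $(0,2R)$ on which $\phi$ is finite, positive and decreasing, and that $\phi(\tfrac m2)=\ln^q\frac{4R}{m}<\infty$, so the middle comparison above involves only finite quantities on $E\setminus E^*$ and $E^*\setminus E$.

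Once this is in hand, the rest is routine. Since $m=\mes E\le r\le R$ and $\tfrac12<\tfrac2e$, we get $\tfrac m2\le\tfrac R2\le\tfrac{2R}{e}$, so Lemma~\ref{lemln} with $A:=2R$ and $a:=\tfrac m2$ applies and yields
\[
2\int_0^{m/2}\ln^q\frac{2R}{s}\dd s\ \le\ 2\,(1+q^{q+1})\,\frac m2\,\ln^q\frac{2R}{m/2}\ =\ (1+q^{q+1})\,\mes E\,\ln^q\frac{4R}{\mes E},
\]
whence $\bigl\|\ln\frac{2R}{|\cdot-x|}\bigr\|_{L^q(E)}\le(1+q^{q+1})^{1/q}\sqrt[q]{\mes E}\,\ln\frac{4R}{\mes E}$. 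Finally, \eqref{sqq} follows from the elementary inequality $(1+q^{q+1})^{1/q}\le2q$ for $q\ge1$, equivalently $1+q^{q+1}\le(2q)^q$, equivalently $1\le q^q(2^q-q)$: here $q^q\ge1$, while $t\mapsto2^t-t$ is increasing on $[1,\infty)$ (its derivative $2^t\ln2-1\ge2\ln2-1>0$) with value $1$ at $t=1$, so $2^q-q\ge1$, and the product is $\ge1$. I expect the symmetric‑rearrangement step to be the main obstacle to writing out rigorously, everything after it being a direct computation.
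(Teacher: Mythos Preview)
Your proof is correct and follows the same approach as the paper: symmetric rearrangement to reduce $\int_E\ln^q\frac{2R}{|t-x|}\,dt$ to $2\int_0^{m/2}\ln^q\frac{2R}{s}\,ds$, then Lemma~\ref{lemln} with $A=2R$, $a=m/2$, then the constant bound $(1+q^{q+1})^{1/q}\le 2q$. The paper outsources the rearrangement step to a quoted Lemma~A from Gol'dberg--Ostrovskii (applied after the shift $t\mapsto t-x$), whereas you prove it inline via the bathtub argument, and your verification of the constant inequality is in fact cleaner than the paper's.
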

\begin{proof}[Proof]  We use 
\begin{LemmaA}[{\rm \cite[Lemma 7.2]{GOe}}] Let 
$f\colon (-a,a)\to {\overline \RR}$ be an even integrable function  on $(-a,a)$ decreasing on $(0,a)$, $E\subset (-a,a)$  be a measurable subset. Then 
\begin{equation}\label{intI}
\int_{E}f\dd \lambda\leq 
2\int_0^{\lambda(E)/2} f(t)\dd t.
\end{equation}
\end{LemmaA}

By Lemma A  we obtain
\begin{equation*}
\int_{E}\ln^q\frac{2R}{\bigl|t-x\bigr|}\dd t=\int_{E-x}\ln^q\frac{2R}{|t|}\dd t
\leq 2\int_0^{\lambda(E-x)/2} \ln^q\frac{2R}{t}\dd t= 2\int_0^{\lambda(E)/2} \ln^q\frac{2R}{t}\dd t,
\end{equation*}
where $a:=\lambda(E)/2\leq r/2\leq R/2\leq 2R/e=:A/e$. Hence, by  Lemma \ref{lemln}, we have 
\begin{equation*}
\int_{E}\ln^q\frac{2R}{\bigl|t-x\bigr|}\dd t\leq  
2(1+q^{q+1})\frac{\lambda(E)}{2}\ln^q\frac{2R}{\lambda(E)/2}=(1+q^{q+1})(\mes E)\ln^q \frac{4R}{\mes E}
\end{equation*}
and 
$(1+q^{q+1})^{1/q}\leq (2q^{q+1})^{1/q}=q\bigl((2q)^{1/2q}\bigr)^2\leq 2q$ for $q\geq 1$,  
since the function $x\underset{\text{\tiny $x\in \RR^+$}}{\longmapsto} x^x$ is decreasing in $[1/e,+\infty)$.  Thus, 
\begin{equation*}
\Bigl\|\int_{E}\ln^q\frac{2R}{\bigl|\cdot-x\bigr|}\Bigr\|_{L^q(E)}\leq  
\sqrt[q]{1+q^{q+1}}\sqrt[q]{\mes E}\ln \frac{4R}{\mes E}\leq 
2 q\sqrt[q]{\mes E}\ln \Bigl(\frac{4R}{\mes E}\Bigr)
\end{equation*}
 which gives \eqref{sqq}.
\end{proof}

\begin{mainlemma} Let  $0< r<+\infty$, $0<b\in \RR^+$,  $E\subset [0,r]$ be measurable, $U=u-v$  be a difference of subharmonic  functions $u, v\in \sbh_* \Bigl(\overline D\bigl((1+b)^2r\bigr)\Bigr)$, 
 and $g\in L^p(E)$, where $1<p\leq +\infty$ and $q\in [1,+\infty)$ is from \eqref{Lq}. Then 
\begin{multline}\label{inDl}
\int_{E} {\sf M}_{U}^+(t)g(t)\dd t\leq
q\frac{2+b}{b}
\Bigl(
{\sf C}_{U^+}\bigl((1+b)r\bigr)
+{\sf N}_{\varDelta_v}\bigl((1+b)r, (1+b)^2r\bigr)\Bigr)\\
\times  \|g\|_{L^p(E)}\sqrt[q]{\mes E}
\ln\frac{4(1+b)r}{\mes E}. 
\end{multline}
\end{mainlemma}

\begin{proof}[Proof] We set $R:=(1+b)r$. By Lemma \ref{lem1}, Lemma \ref{lem2} with $(1+b)^2r$ instead of $R$ and $R$ instead of $r$, and Lemma \ref{lem4}, we have
\begin{multline*}
\int_{E} {\sf M}_{U}^+(t)g(t)\dd t\leq \Bigl(\frac{2+b}{b}
{\sf C}_{U^+}\bigl((1+b)r\bigr)\sqrt[q]{\mes E}
\\
+\frac{1+b}{b}{\sf N}_{\varDelta_v}\bigl((1+b)r,(1+b)^2r\bigr) 
2q\sqrt[q]{\mes E}\ln \frac{4(1+b)r}{\mes E} \Bigr) \|g\|_{L^p(E)}\\
\leq \frac{2+b}{b}
\Bigl({\sf C}_{U^+}\bigl((1+b)r\bigr)
+{\sf N}_{\varDelta_v}\bigl((1+b)r,(1+b)^2r\bigr) \Bigr) \|g\|_{L^p(E)}
q\sqrt[q]{\mes E}\ln \frac{4(1+b)r}{\mes E}, 
\end{multline*}
which gives \eqref{inDl}.
\end{proof}
\begin{proof}[Proof of Main Theorem] 
We can assume that $U=u-v$ is the canonical representation of $U$. 
Consider a number $b>0$ such that $(1+b)^2=k$. By Main Lemma, we have
\begin{multline*}
\int_{E} {\sf M}_{U}^+(t)g(t)\dd t\leq q\frac{\sqrt{k}+1}{\sqrt{k}-1}
\Bigl( {\sf C}_{U^+}(\sqrt{k}r) +{\sf N}_{\varDelta_v}(\sqrt{k}r, kr)\Bigr)
\|g\|_{L^p(E)}\sqrt[q]{\mes E} \ln\frac{4\sqrt{k}r}{\mes E}\\
\leq q\frac{\sqrt{k}+1}{\sqrt{k}-1}
\Bigl( \underset{{\sf T}_{U}(r_0, \sqrt{k}r)}{\underbrace{{\sf C}_{U^+}(r_0, \sqrt{k}r)
+{\sf N}_{\varDelta_v}( r_0, \sqrt{k}r)}}+{\sf C}_{U^+}(r_0)\Bigr.
\\
\Bigl.
+\underset{{\sf T}_{U}(r_0, kr)}{\underbrace{{\sf C}_{U^+}(r_0, kr)+{\sf N}_{\varDelta_v}(r_0, kr)}}\Bigr)
 \|g\|_{L^p(E)}\sqrt[q]{\mes E}
\ln\frac{4\sqrt{k}r}{\mes E}
\\ 
\leq q\frac{2k}{k-1}
\Bigl( {\sf T}_{U}(r_0, \sqrt{k}r)+{\sf T}_{U}(r_0, kr)+{\sf C}_{U^+}(r_0)\Bigr)
\|g\|_{L^p(E)}\sqrt[q]{\mes E}
\ln\frac{4kr}{\mes E},
\\ 
\leq q\frac{4k}{k-1}
\Bigl( {\sf T}_{U}(r_0, kr)+{\sf C}_{U^+}(r_0)\Bigr)
\|g\|_{L^p(E)}\sqrt[q]{\mes E}
\ln\frac{4kr}{\mes E},
\end{multline*}
and, by definition \eqref{T}, obtain  \eqref{inDl+}. 

Evidently, for any function $u$ with values in $\overline \RR$, we have 
\begin{equation}\label{M+}
{\sf M}^+_{u}={\sf M}_{u^+}, \quad  {\sf M}_{|u|}\leq {\sf M}_{u^+}+{\sf M}_{u^-}={\sf M}_{u^+}+{\sf M}_{(-u)^+},
\end{equation}
If $u\in \sbh_*(\CC)$, then, under conditions of Main Theorem, the function  ${\sf M}_u^+$ is increasing, and, by H\"older's inequality, 
\begin{equation}\label{M+E}
\int_{E} {\sf M}_{u^+}(t)g(t)\dd t\leq
{\sf M}_{u^+}(r) \|g\|_{L^p(E)}\sqrt[q]{\mes E}.
\end{equation}
For  $U_u:=0-u$, the difference  $0-u$ is the canonical representation of $\delta$-subharmonic non-trivial function $U_u$ and  we have
\begin{equation}\label{TCM}
{\sf T}_{U_u}(r,R)\overset{\eqref{T}}{=}  {\sf C}_{\sup\{0,u\}}(r,R)={\sf C}_{u^+}(r,R)\leq  {\sf C}_{u^+}(R)
\leq {\sf M}_{u^+}(R).
\end{equation}
Hence, by Main Theorem in part \eqref{inDl+} for $U_u$ in the role of $U$, we obtain  
   
\begin{multline*}
\frac{1}{r}\int_{E} {\sf M}_{(-u)^+}(t)g(t)\dd t\overset{\eqref{M+}}{=}
\frac{1}{r}\int_{E} {\sf M}_{U_u}^+(t)g(t)\dd t\\
\overset{\eqref{inDl+}}{\leq}
\frac{4kq}{k-1} \bigl({\sf T}_{U_u}(r_0,kr)+{\sf C}_{U_u^+}(r_0)\bigr) \|g\|_{L^p(E)}\frac{\sqrt[q]{\mes E}}{r}
\ln\frac{4kr}{\mes E}\\
\overset{\eqref{TCM}}{\leq}
\frac{4kq}{k-1} \bigl({\sf M}_{u^+}(kr)+{\sf C}_{(-u)^+}(r_0)\bigr) \|g\|_{L^p(E)}\frac{\sqrt[q]{\mes E}}{r}
\ln\frac{4kr}{\mes E}. 
\end{multline*}
The latter together with \eqref{M+E} gives \eqref{uM}. 
\end{proof}


\vsk

Bashkir State University

Bashkortostan

 Russian Federation

 khabib-bulat@mail.ru 

\vs

\end{document}